\newcommand{\mathsym}[1]{{}}
\newcommand{\thmref}[1]{Theorem~\ref{#1}}
\newcommand{\lemref}[1]{Lemma~\ref{#1}}
\newcommand{\eqnref}[1]{Equation~(\ref{#1})}
\newcommand{\corref}[1]{Corollary~\ref{#1}}
\newcommand{\figref}[1]{Figure~\ref{#1}}
\newtheorem{theorem}{Theorem}[section]
\newtheorem{corollary}[theorem]{Corollary}
\newtheorem{lemma}[theorem]{Lemma}
\theoremstyle{definition}
\newcommand{\ee}[1]{E(#1)}
\newcommand{\vv}[1]{V(#1)}
\def\<{\langle }
\def\>{\rangle }
\newcommand{\secref}[1]{\S\ref{#1}}
\begin{document}

\title[The Number of Spanning Trees]
{The Number of Spanning Trees for The Generalized Cones of $K_n$, The Generalized Half Cones of $K_{m,n}$ and Some Family of Modified $K_{m,n}$}

\author{Zubeyir Cinkir}
\address{Zubeyir Cinkir\\
Department of Industrial Engineering\\
Abdullah Gul University\\
38100, Kayseri, TURKEY\\}
\email{zubeyir.cinkir@agu.edu.tr}


\keywords{Complete Graph, Bipartite Graph, The Total Number of Spanning Trees, Generalized Cone of a Graph, Vertex Deletion}

\begin{abstract}
We compute the total number of spanning trees for the generalized cone of the complete graph $K_n$ and a number of families of some modified bipartite graphs $K_{m,n}$. In particular, we obtain a new method of finding the number of spanning trees of $K_n$ and $K_{m,n}$. Our method relies on the vertex deletion formula for the number of spanning trees.
\end{abstract}

\maketitle

\section{Introduction}\label{sec introduction}

Let $G$ be a connected graph possibly having self-loops and multiple edges. The graph
$G$ has the set of vertices $\vv{G}$ and the set of edges $\ee{G}$. We denote the total number of spanning trees of $G$ by $t(G)$.

For the complete graph $K_n$, it is known that $t(K_n)=n^{n-2}$, \cite{Ca} and \cite{KDT}.
We also know that $t(K_{n_1,n_2, \ldots, n_k})=n^{k-2} \prod_{i=1}^k (n-n_i)^{n_i-1}$ 
for any $k$-partite graph $K_{n_1,n_2, \ldots, n_k}$, where
where $n=n_1+n_2+\cdots + n_k$ \cite{Au}, \cite{ER}, \cite{HB} and \cite{L}. In particular, $t(K_{m,n})=n^{m-1} m^{n-1}$.

Given a graph $G$, by adding a vertex $p$ to $V(G)$ and by adding $m\geq 1$ multiple edges between $p$ and each vertices of $G$ we obtain the generalized cone of $G$. We denote this graph by $C^mG$. 
For example, if $G=K_n$ the complete graph on $n$ vertices, the graph on the left in \figref{fig3} illustrates $C^3K_3$. In this case, $C^1K_n$ is the cone of $K_n$, which is nothing but $K_{n+1}$. In \secref{sec complete}, we found that
$$t(C^m K_n)=m (m+n)^{n-1}.$$

Let $K_{m,n}$ be the complete bipartite graph with the vertex set $V_1 \cup V_2$, where $V_1=\{ p_1, \, \ldots, \, p_n \}$ and $V_2=\{ q_1, \, \ldots, \, q_m \}$. From this graph, we obtain the graph $M^kK_{m,n}$ by replacing each edge between the vertex $q_m$ and the vertices $p_i$ with $k$ multiple edges. We call this graph modified bipartite graph.
The graph on the right in \figref{fig3} illustrates the case with $k=2$, $m=3$ and $n=4$. When $k=1$, $M^kK_{m,n}$ is the usual bipartite graph $K_{m,n}$. In \secref{sec Bipartite}, we found that
$$t(M^k K_{m,n})=k \cdot n^{m-1} (m+k-1)^{n-1}.$$

\begin{figure}
\centering
\includegraphics[scale=0.6]{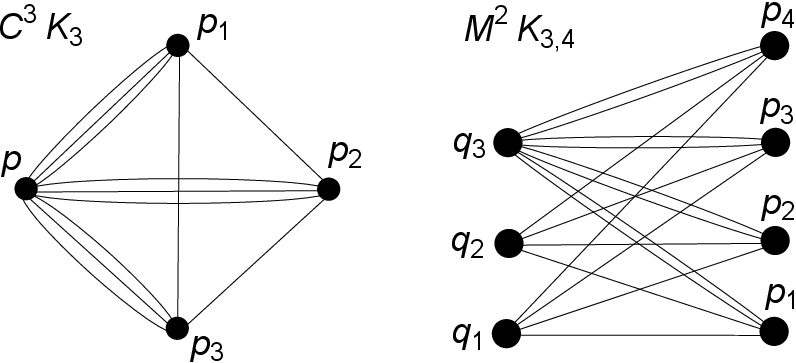} \caption{The graphs  $C^3K_3$ and $M^2K_{3,4}$.} \label{fig3}
\end{figure}

Given $K_{m,n}$ with the vertex set $V_1 \cup V_2$, where $V_1$ and $V_2$ are as given above, we obtain the graph $M^{k_1,k_2,\ldots, k_m} K_{m,n}$ by replacing each edge between the vertices $q_i$ and $p_j$ by $k_i$ multiple edges. In this way, both graphs have the same vertex set, but $M^{k_1,k_2,\ldots, k_m} K_{m,n}$ has $n(k_1+\ldots+k_m)$ edges while $K_{m,n}$ has $n m$ edges. We call  $M^{k_1,k_2,\ldots, k_m} K_{m,n}$ the generalized complete bipartite graph. Note that $M^{1,1,\ldots, 1} K_{m,n}=K_{m,n}$. The graph on the left in \figref{figmkandfmk} illustrates the case with $k_1=3$, $k_2=2$, $m=2$ and $n=3$. In \secref{sec bipartite general}, we showed that
$$
t(M^{k_1,k_2,\ldots, k_m} K_{m,n})= n^{m-1} k_1k_2\cdots k_m (k_1+k_2+\cdots+k_m)^{n-1}.
$$
\begin{figure}
\centering
\includegraphics[scale=0.6]{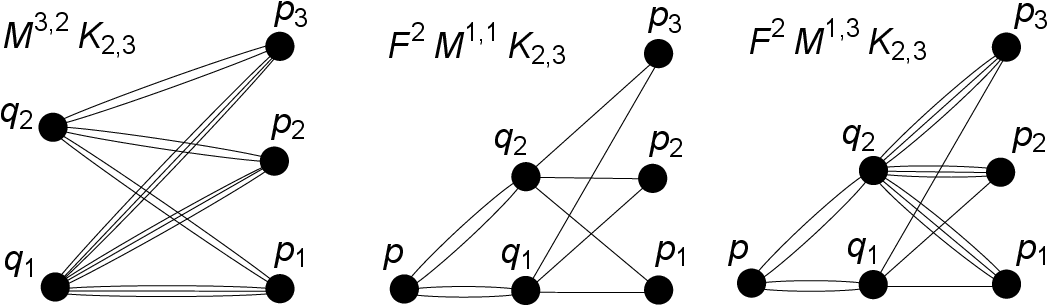} \caption{The graphs $M^{3,2}K_{2,3}$, $F^2M^{1,1}K_{2,3}$ and $F^2M^{1,3}K_{2,3}$.} \label{figmkandfmk}
\end{figure}

We obtain the graph $F^kM^{k_1,k_2,\ldots, k_m} K_{m,n}$ from the graph $M^{k_1,k_2,\ldots, k_m} K_{m,n}$
by adding a vertex $p$ and adding $k$ edges between $p$ and the vertex $q_j$ for each $j=1,\,2,\ldots,m$.
We call this graph the generalized half cone of $M^{k_1,k_2,\ldots, k_m} K_{m,n}$. The graph in the middle in \figref{figmkandfmk} illustrates the case with $k=2$, $k_1=k_2=1$, $m=2$ and $n=3$. The graph on the right in \figref{figmkandfmk} illustrates the case with $k=2$, $k_1=1$, $k_2=3$, $m=2$ and $n=3$. In \secref{sec half cone bipartite}, we showed that
$$
t(F^kM^{k_1,k_2,\ldots, k_m} K_{m,n})= (k_1+k_2+\cdots+k_m)^{n-1} k \Big( \prod_{i=1}^m (k+k_i n) \Big) \sum_{i=1}^m \frac{k_i}{k+k_i n}.
$$

For any two vertices $p, \, q \, \in \vv{G}$,  $G_{pq}$ is the graph obtained from $G$ by identifying these vertices.

The following theorem shows how $t(G)$ behaves under the deletion of a vertex of $G$:
\begin{theorem}\cite[Theorem 5.8]{C1}\label{thm main}
Let $u \in V(G)$, $N_G(u)= \{ p_1, \, \ldots, \, p_n \} \subset V(G)$ for a graph $G$, and let $u$ be adjacent to the vertex $p_i$ via by $a_i \geq 1$ number of edges for each $i \in \{1, \, \ldots, \, n \}$ with $n \geq 2$. If $u$ is not a cut vertex, then for $G$ and $H=G-u$ we have
\begin{equation*}
\begin{split}
t(G)&= \Big( \sum_{i=1}^n a_i \Big) t(H) + \sum_{\substack{S \subset N_G(u) \\ |S| \geq 2}} \Big( \prod_{i \in I_S } a_i \Big) t(H_{S}),
\end{split}
\end{equation*}
where $I_S$ is the set of indexes of the vertices in $S$, and  $H_S$ is the graph obtained from $H$ by identifying all vertices in $S$. 
\end{theorem}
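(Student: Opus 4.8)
The plan is to classify the spanning trees of $G$ according to the set of neighbors to which $u$ is joined. Fix a spanning tree $T$ of $G$. Since $T$ is acyclic it cannot use two of the $a_i$ parallel edges joining $u$ to a common neighbor $p_i$, so $u$ is joined in $T$ to a nonempty subset $S \subseteq N_G(u)$, using exactly one edge to each $p_i \in S$. Deleting $u$ removes a vertex of degree $|S|$, hence $T - u$ is a spanning forest of $H = G - u$ with exactly $|S|$ components; moreover the vertices of $S$ lie in pairwise distinct components, since two of them in one component would close a cycle in $T$ through $u$. Conversely, every spanning forest of $H$ with $|S|$ components, each containing exactly one vertex of $S$, extends to spanning trees of $G$ with attachment set $S$ by joining $u$ to the unique vertex of $S$ in each component, with $a_i$ choices of parallel edge to each $p_i \in S$. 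Writing $f(S)$ for the number of spanning forests of $H$ that separate the vertices of $S$ one per component, this gives $t(G) = \sum_{S \subseteq N_G(u),\, S \neq \emptyset} \left(\prod_{i \in I_S} a_i\right) f(S)$.

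Next I would evaluate $f(S)$. For $|S| = 1$ a spanning forest with a single component is simply a spanning tree of $H$, so $f(\{p_i\}) = t(H)$; here the hypothesis that $u$ is not a cut vertex is what guarantees $H$ is connected, so that $t(H)$ is the correct count. For $|S| \geq 2$ I claim $f(S) = t(H_S)$, where $H_S$ is obtained from $H$ by identifying the vertices of $S$ to a single vertex $v_S$.

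The heart of the argument is this contraction identity, which I would prove by a multiplicity-preserving bijection. Given a forest $F$ counted by $f(S)$, identify the vertices of $S$ to $v_S$: the $|S|$ components of $F$ become glued at $v_S$ into a connected subgraph of $H_S$, and it is acyclic because no two vertices of $S$ shared a component of $F$; comparing edge and vertex counts shows the image is a spanning tree of $H_S$. Conversely a spanning tree of $H_S$ contains no self-loop at $v_S$, so each of its edges lifts to a unique edge of $H$ — an edge of $H$ internal to $S$ would become a loop and so is never used — and the lift is exactly a forest separating $S$ one vertex per component. Since the edges of $H$ from $S$ to its complement are precisely the multiple edges incident to $v_S$ in $H_S$, this correspondence preserves all edge multiplicities, giving $f(S) = t(H_S)$.

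Substituting these evaluations into the sum, the singleton terms contribute $\left(\sum_{i=1}^n a_i\right) t(H)$ and the terms with $|S| \geq 2$ contribute $\sum_{|S| \geq 2} \left(\prod_{i \in I_S} a_i\right) t(H_S)$, which is exactly the asserted formula. I expect the main obstacle to be the contraction identity of the previous paragraph: one must check in both directions, and with care for the self-loops and multiple edges created by identifying $S$, that contraction is a genuine multiplicity-preserving bijection between the separated-forest family and the spanning trees of $H_S$.
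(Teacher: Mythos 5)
Your proposal is correct and complete as written. One point of comparison worth making explicit: this paper does not actually prove Theorem~\ref{thm main} at all --- it imports the statement from \cite[Theorem 5.8]{C1}, a separate paper whose stated subject is Rayleigh's principles for resistive electrical networks, so the cited proof presumably comes out of electrical-network/matrix machinery rather than direct combinatorics. Your route is a self-contained bijective argument: classify spanning trees $T$ of $G$ by the attachment set $S \subseteq N_G(u)$ (correctly noting that a tree can use at most one of the $a_i$ parallel edges to each $p_i$), observe that $T-u$ is a spanning forest of $H$ with $|S|$ components separating the vertices of $S$, and then reduce the count $f(S)$ of such forests to $t(H_S)$ via the contraction bijection. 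You identified the genuinely delicate step yourself --- that contraction is a multiplicity-preserving bijection --- and your treatment of it is sound: edges of $H$ internal to $S$ become loops at $v_S$ and are never used by a tree, distinct non-internal edges of $H$ map to distinct edges of $H_S$ (so a cycle in a lift would force a closed walk with no repeated edges inside a tree, a contradiction, and the same argument shows vertices of $S$ land in distinct components of the lift), and the vertex/edge count $|V(H)|-|S|$ pins down both directions. You also correctly located where the no-cut-vertex hypothesis enters: it makes $H$ connected, so the singleton terms $f(\{p_i\})=t(H)$ contribute $\big(\sum_{i=1}^n a_i\big)t(H)$. What your approach buys is elementarity and independence from the external reference; what the citation buys the paper is brevity and placement of the identity inside a broader network-theoretic framework. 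Either way, your argument would serve as a legitimate standalone proof of the theorem as stated.
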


Our method for finding the number of spanning trees
relies on \thmref{thm main}.

\section{Generalized Cones of Complete Graphs}\label{sec complete}

In this section, we compute the number of spanning trees of $C^m K_n$, the generalized cone of complete graph $ K_n$.
\begin{theorem}\label{thm Kn}
For any integers $m \geq 1$ and $n \geq 0$, we have
\begin{equation*}\label{}
\begin{split}
t(C^m K_n)=m (m+n)^{n-1}.
\end{split}
\end{equation*}
\end{theorem}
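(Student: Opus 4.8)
The plan is to prove this by induction on $n$, the number of vertices of the base complete graph $K_n$, using the vertex deletion formula of \thmref{thm main} applied to the cone vertex $p$. To set up the induction, I first dispose of the base cases: for $n=0$ the graph $C^mK_0$ is a single isolated vertex with no edges, so $t(C^mK_0)=1=m(m+0)^{-1}$ only after interpreting the formula correctly, and it is cleaner to start from $n=1$, where $C^mK_1$ consists of two vertices joined by $m$ multiple edges, giving $t(C^mK_1)=m=m(m+1)^0$, as required. Since $p$ is adjacent to every vertex of $K_n$ via exactly $m$ edges and $K_n$ is connected for $n\geq 1$, the cone vertex $p$ is never a cut vertex, so the hypotheses of \thmref{thm main} are satisfied with $a_i=m$ for all $i$.

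The core of the argument is to apply \thmref{thm main} with $u=p$ and $H=C^mK_n-p=K_n$. The neighborhood $N_G(p)$ consists of all $n$ vertices of $K_n$, so $\sum_{i=1}^n a_i=mn$. For each subset $S\subset N_G(p)$ with $|S|=s\geq 2$, the product $\prod_{i\in I_S}a_i$ equals $m^s$, and $H_S$ is the graph obtained from $K_n$ by identifying the $s$ vertices in $S$ into a single vertex. The formula then becomes
\begin{equation*}
t(C^mK_n)=mn\cdot t(K_n)+\sum_{s=2}^n m^s\binom{n}{s}\,t\big((K_n)_S\big),
\end{equation*}
where $(K_n)_S$ denotes $K_n$ with any $s$ of its vertices identified; by symmetry this depends only on $s$.

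The key computational step, and the main obstacle, is to evaluate $t\big((K_n)_S\big)$ when $s$ vertices of $K_n$ are merged. Identifying $s$ vertices of $K_n$ produces a graph on $n-s+1$ vertices in which the merged vertex is joined to each of the remaining $n-s$ vertices by $s$ parallel edges (since each of the $s$ original vertices contributed one edge to each outside vertex), the merged vertex carries $\binom{s}{2}$ self-loops, and the remaining $n-s$ vertices still form a $K_{n-s}$ among themselves. Self-loops do not affect the spanning tree count, so I must compute the number of spanning trees of $K_{n-s}$ with one extra vertex attached to all $n-s$ of its vertices by $s$-fold multiple edges. But that is precisely $C^sK_{n-s}$, the generalized cone of $K_{n-s}$ with multiplicity $s$. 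Hence $t\big((K_n)_S\big)=t(C^sK_{n-s})$, and by the induction hypothesis (on the total vertex count, since $n-s<n$) this equals $s(s+n-s)^{n-s-1}=s\,n^{\,n-s-1}$.

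Substituting $t(K_n)=n^{n-2}$ and the recursive values back in, the identity to verify becomes
\begin{equation*}
t(C^mK_n)=mn\cdot n^{n-2}+\sum_{s=2}^n m^s\binom{n}{s}\,s\,n^{\,n-s-1},
\end{equation*}
and the claim reduces to the purely algebraic statement that this equals $m(m+n)^{n-1}$. The plan for this last step is to combine the leading term $mn\cdot n^{n-2}=m\cdot n^{n-1}$ (which is the $s=1$ contribution of $m^s\binom{n}{s}s\,n^{n-s-1}$) with the sum so that the identity is equivalent to $\sum_{s=1}^{n}\binom{n}{s}s\,m^{s-1}n^{n-s}=(m+n)^{n-1}$. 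Recognizing $s\binom{n}{s}=n\binom{n-1}{s-1}$ and reindexing with $t=s-1$ turns the left side into $n\sum_{t=0}^{n-1}\binom{n-1}{t}m^{t}n^{n-1-t}$, which is exactly $n(m+n)^{n-1}/n=(m+n)^{n-1}$ by the binomial theorem, after dividing through by $m$. The only genuinely delicate point is the geometric identification argument showing $t((K_n)_S)=s\,n^{n-s-1}$; once that is nailed down, the remaining binomial manipulation is routine.
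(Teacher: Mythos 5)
Your proposal is correct and follows essentially the same route as the paper's own proof: strong induction on $n$, deletion of the apex $p$ via \thmref{thm main}, the key identification of $K_n$ with $s$ vertices merged as the generalized cone $C^sK_{n-s}$ (the paper's $H_{p_1p_2\cdots p_j}=C^jK_{n-j}$), and the same $s\binom{n}{s}=n\binom{n-1}{s-1}$ reindexing plus the binomial theorem. Two cosmetic points: your stated intermediate identity carries a stray factor of $n$ (the correct equivalent form is $\sum_{s=1}^{n}\binom{n}{s}\,s\,m^{s-1}n^{\,n-s-1}=(m+n)^{n-1}$, which your subsequent manipulation in fact yields), and rather than importing Cayley's formula $t(K_n)=n^{n-2}$ you can obtain it from your own induction hypothesis via $K_n=C^1K_{n-1}$, exactly as the paper does, which keeps the argument self-contained and non-circular.
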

\begin{proof}
We obtain the proof by strong induction on $n$.

\textbf{Case $n=0$:} Since $C^mK_0$ is the graph with one vertex and $m$ cycles, $t(C^m K_0)=1$. This agrees with the fact that $m(m+0)^{0-1}=1$.

\textbf{Case $n=1$:} In this case,  $C^mK_1$ is the graph with two vertices and $m$ multiple edges. This is also known as Banana graph $B_m$. Therefore, $t(C^m K_1)=m$. This agrees with the fact that $m(m+1)^{1-1}=m$.

Suppose $t(C^m K_s)=m (m+s)^{s-1}$ for integers $s$ such that $0 \leq s \leq n-1$, where $n \geq 1$ is a given integer.

Let $V=\{p_1, p_2, \ldots, p_n \}$ be the set of vertices of the complete graph $K_n$ so that $C^m K_n$ has the vertex set $V \cup \{ p \}$. Note that $C^mK_n-p$ is nothing but $K_{n}$. We set $H:=K_{n}$.

Since the neighbourhood of the vertex $p$ is $N_{C^m K_n}(p)=V$ and that $p$ is connected to $p_i$ via $m$ edges for each index $i$, applying \thmref{thm main} gives
\begin{equation*}\label{eqn Kn1}
\begin{split}
t(C^mK_n) &= m \cdot t(H)+ \sum_{S \subset V, \, \,  |S| \geq 2} m^{|S|} t(H_{S})\\
&=m \cdot t(H)+  \sum_{j=2}^n \sum_{S \subset V, \, \, |S| =j} m^j t(H_{p_1 p_2 \cdots p_j}), \quad \text{by the symmetries in $H$}\\
&= m \cdot t(H)+  \sum_{j=2}^n \binom{n}{j} m^j t(H_{p_1 p_2 \cdots p_j})
\end{split}
\end{equation*} 
We note that $H_{p_1 p_2 \cdots p_j}$ is the same as the graph $C^jK_{n-j}$ with different vertex labeling, and that $H=K_n=C^1K_{n-1}$. Thus, we have 
\begin{equation*}\label{eqn Kn2}
\begin{split}
t(C^mK_n) &=\sum_{j=1}^n \binom{n}{j} m^j t(C^jK_{n-j})\\
&=\sum_{j=1}^n \binom{n}{j} m^j j n^{n-j-1}, \quad \text{by the induction assumption}\\
&=\sum_{j=1}^n \binom{n-1}{j-1} m^j n^{n-j}, \quad \text{since $\binom{n}{j}=\frac{n}{j}\binom{n-1}{j-1}$}\\
&=\sum_{j=0}^{n-1} \binom{n-1}{j} m^{j+1} n^{n-1-j}, \quad \text{by the change of index}\\
&=m (m+n)^{n-1}, \quad \text{by Binomial Theorem}.
\end{split}
\end{equation*} 
This completes the proof.
\end{proof}

Since $K_n=C^1K_{n-1}$, we obtain the following immediate consequence of \thmref{thm Kn}:
\begin{corollary}\label{cor Kn}
\begin{equation*}\label{}
\begin{split}
t(K_n)=n^{n-2}.
\end{split}
\end{equation*}
\end{corollary}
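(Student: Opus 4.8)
The plan is to obtain this corollary as an immediate specialization of \thmref{thm Kn}, with no new combinatorial work required. The excerpt already records the structural identity $K_n = C^1 K_{n-1}$: the complete graph on $n$ vertices is exactly what one gets by coning a single new vertex—joined by one edge to each of the $n-1$ existing vertices—onto $K_{n-1}$. Since $t(K_n)$ therefore equals $t(C^m K_s)$ for the parameter choices $m = 1$ and $s = n-1$, I would simply substitute these values into the closed form $t(C^m K_s) = m(m+s)^{s-1}$ proven in \thmref{thm Kn}.

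Carrying out the substitution yields $t(K_n) = 1 \cdot \big(1 + (n-1)\big)^{(n-1)-1} = n^{n-2}$, which is precisely the asserted formula. The entire derivation is a one-line evaluation; all of the genuine content—the induction and the binomial identity—has already been expended in establishing \thmref{thm Kn}.

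There is essentially no obstacle here. The only two points meriting a moment's care are, first, verifying the identification $K_n = C^1 K_{n-1}$ (namely that attaching one new vertex by a single edge to each vertex of $K_{n-1}$ reproduces $K_n$ with no spurious multiple edges), and second, checking that the hypotheses $m \geq 1$ and $n \geq 0$ of \thmref{thm Kn} do cover the needed instance $m = 1$, $s = n-1$ for every relevant $n$. Both are immediate, and one can sanity-check the small case $n = 1$, where $K_1 = C^1 K_0$ gives $t(K_1) = 1 \cdot (1+0)^{-1} = 1 = 1^{-1} = n^{n-2}$, consistent with the convention that a single vertex has one spanning tree.
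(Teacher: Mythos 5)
Your proposal is correct and matches the paper's own argument exactly: the paper derives the corollary by the same one-line specialization, observing $K_n = C^1K_{n-1}$ and substituting $m=1$ into the formula $t(C^mK_{n-1}) = m(m+n-1)^{n-2}$ from \thmref{thm Kn}. Your additional checks (the identification is simple-edged, and the hypotheses of \thmref{thm Kn} cover $m=1$, $s=n-1\geq 0$) are sound and consistent with the paper's treatment of the degenerate case $n=0$ in the theorem itself.
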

\corref{cor Kn} is known as Cayley's Theorem.

\section{Modified Complete Bipartite Graphs}\label{sec Bipartite}

Let $K_{m,n}$ be given with the vertex set $V_1 \cup V_2$, where 
$V_1=\{ p_1, \, \ldots, \, p_n \}$ and $V_2=\{ q_1, \, \ldots, \, q_m \}$. 
Let the modified complete bipartite graph $M^k K_{m,n}$ be as defined in \secref{sec introduction}.
\begin{theorem}\label{thm MKmn}
For any integers $m \geq 1$, $n \geq 1$ and $k \geq 1$, we have
\begin{equation*}\label{}
\begin{split}
t(M^k K_{m,n})=k \cdot n^{m-1} (m+k-1)^{n-1}.
\end{split}
\end{equation*}
\end{theorem}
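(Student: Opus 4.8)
The plan is to mirror the proof of \thmref{thm Kn}: use strong induction on the sum $m+n$ and apply \thmref{thm main} to the distinguished vertex $q_m$, whose $k$ multiple edges to every vertex of $V_1$ give a uniform multiplicity that keeps the subset sum manageable.

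First I would dispose of the base cases $m=1$ and $n=1$, which also cover the degenerate situations where \thmref{thm main} does not apply because $q_m$ has fewer than two neighbours or is a cut vertex. When $m=1$, the graph $M^kK_{1,n}$ is a multi-star: a single centre $q_1$ joined to each of the $n$ vertices $p_i$ by $k$ parallel edges, so a spanning tree is an independent choice of one of the $k$ edges at each $p_i$, giving $t(M^kK_{1,n})=k^n$, in agreement with $k\,n^{0}(k)^{n-1}$. When $n=1$ the graph is again a multi-star, now with centre $p_1$ and $m$ leaves, only one of which ($q_m$) carries $k$ parallel edges, so $t(M^kK_{m,1})=k$, in agreement with $k\,1^{m-1}(m+k-1)^{0}$.

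For the inductive step with $m,n\geq 2$, I would delete $q_m$. Its neighbourhood is $N(q_m)=V_1=\{p_1,\dots,p_n\}$ with every multiplicity $a_i=k$, and since $H:=M^kK_{m,n}-q_m=K_{m-1,n}$ is connected, $q_m$ is not a cut vertex, so \thmref{thm main} applies. Using the full symmetry of the $p_i$ in $H$ exactly as in \thmref{thm Kn}, the subset sum collapses to $\sum_{j=1}^{n}\binom{n}{j}k^{j}\,t(H_{p_1\cdots p_j})$, where the $j=1$ term records the main term $(\sum a_i)t(H)$. The key geometric observation — and the step I expect to be the main obstacle — is to identify $H_{p_1\cdots p_j}$: contracting $j$ of the $p$-vertices of $K_{m-1,n}$ produces a vertex joined to all $m-1$ remaining $q$-vertices by $j$ parallel edges, while the other $n-j$ $p$-vertices keep single edges. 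Reading this with the two sides of the bipartition interchanged, it is precisely $M^{j}K_{n-j+1,\,m-1}$; the care needed here is to track correctly which part of the bipartition carries the multiple edges, since the roles of $m$ and $n$ swap under the contraction.

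Finally I would invoke the induction hypothesis, valid since $(n-j+1)+(m-1)=m+n-j<m+n$, to substitute $t(M^{j}K_{n-j+1,m-1})=j\,(m-1)^{n-j}\,n^{m-2}$ (the base $n$ arises because $(n-j+1)+j-1=n$). This turns the formula into $n^{m-2}\sum_{j=1}^{n}\binom{n}{j}\,j\,k^{j}(m-1)^{n-j}$, and the same two manipulations used in \thmref{thm Kn} — the identity $\binom{n}{j}j=n\binom{n-1}{j-1}$ followed by a reindexing and the Binomial Theorem — collapse it to $k\,n^{m-1}(m+k-1)^{n-1}$, completing the induction.
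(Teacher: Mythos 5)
Your proposal is correct and follows essentially the same route as the paper's own proof: strong induction on $m+n$, deletion of $q_m$ via \thmref{thm main}, the identification $H_{p_1\cdots p_j}=M^{j}K_{n-j+1,\,m-1}$ (with the bipartition roles swapped), and the same $\binom{n}{j}j=n\binom{n-1}{j-1}$/reindexing/Binomial Theorem finish. Your explicit remarks that the base cases $m=1$ and $n=1$ absorb the situations where $q_m$ is a cut vertex or has fewer than two neighbours are a small but welcome sharpening of what the paper leaves implicit.
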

\begin{proof}
We first note that $M^kK_{m,n}-q_m=M^1K_{m-1,n}=K_{m-1,n}$. We set $H:=K_{m-1,n}$.
For $N=m+n$ with $m \geq 1$ and $n \geq 1$, we give the proof by the strong induction on $N$.

\textbf{Case $n=1$:} In this case, $M^k K_{m,1}$ is a graph obtained from the banana graph $B_k$ by attaching a star graph $S_{m-1}$ to one of its vertices. Thus,  $t(M^k K_{m,1})=k$ for any $k \geq 1$ and $m \geq 1$. On the other hand, $k \cdot 1^{m-1} (m+k-1)^{1-1}=k$.

\textbf{Case $m=1$:} The graph $M^k K_{1,n}$ is the one point union of $n$ copies of $B_k$. Then by the multiplicative property of the number of spanning trees,  $t(M^k K_{1,n})=k^n$ for each $k \geq 1$ and $n \geq 1$. This agrees with the number $k \cdot n^{1-1} (1+k-1)^{n-1}=k^n$.

Suppose the given formula holds for each integer $m$ and $n$ with $s=m+n$ such that $2 \leq s <N$, where $N$ is some integer. Then our aim is to show that it also holds for $N$.

Since $N_{M^k K_{m,n}}(q_m)=V_1$ and the vertex $q_m$ is connected to $p_i$ via $k$ edges
for each $i=1, \ldots, n$, we use \thmref{thm main} to obtain
\begin{equation*}\label{eqn MKmn1}
\begin{split}
t(M^kK_{m,n}) &= n \cdot k \cdot t(H)+ \sum_{S \subset V_1, \, \,  |S| \geq 2} k^{|S|} t(H_{S})\\
&=n \cdot k \cdot t(H)+  \sum_{j=2}^{n} \sum_{S \subset V_1, \, \, |S| =j} k^j t(H_{p_1 p_2 \cdots p_j}), \quad \text{by the symmetries in $H$}\\
&= n \cdot k \cdot t(H)+  \sum_{j=2}^n \binom{n}{j} k^j t(H_{p_1 p_2 \cdots p_j})
\end{split}
\end{equation*} 
We have $t(M^1K_{m-1,n})=t(K_{m-1,n})=t(K_{n,m-1})=t(M^1K_{n,m-1})$, and  $t(H_{p_1 p_2 \cdots p_j})=t(M^jK_{n-j+1,m-1})$. Thus,
\begin{equation*}\label{eqn MKmn2}
\begin{split}
t(M^kK_{m,n}) &= n \cdot k \cdot t(H)+  \sum_{j=2}^n \binom{n}{j} k^j  t(H_{p_1 p_2 \cdots p_j})\\
&= n \cdot k \cdot t(M^1K_{n,m-1})+  \sum_{j=2}^n \binom{n}{j} k^j t(M^jK_{n-j+1,m-1})\\
&=\sum_{j=1}^n \binom{n}{j} k^j t(M^jK_{n-j+1,m-1})\\
&=\sum_{j=1}^n \binom{n}{j} k^j j (m-1)^{n-j} n^{m-2}, \quad \text{by the induction assumption}\\
&=\sum_{j=1}^n \binom{n-1}{j-1} k^j (m-1)^{n-j} n^{m-1}, \quad \text{since $\binom{n}{j}=\frac{n}{j}\binom{n-1}{j-1}$}\\
&=n^{m-1}k\sum_{j=0}^{n-1} \binom{n-1}{j} k^j (m-1)^{n-1-j}, \quad \text{by the change of index}\\
&=k n^{m-1} (m+k-1)^{n-1}, \quad \text{by Binomial Theorem}.
\end{split}
\end{equation*}
This is what we want to show.
\end{proof}

Since $K_{m,n}=M^1K_{m,n}$, we obtain the following immediate consequence of \thmref{thm MKmn}:
\begin{corollary}\label{cor Kmn}
\begin{equation*}\label{}
\begin{split}
t(K_{m,n})=n^{m-1}m^{n-1}.
\end{split}
\end{equation*}
\end{corollary}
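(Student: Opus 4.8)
The plan is to recognize $K_{m,n}$ as the $k=1$ instance of the modified complete bipartite graph $M^k K_{m,n}$ and to obtain the formula by a direct specialization of \thmref{thm MKmn}. As noted just before the statement, $K_{m,n} = M^1 K_{m,n}$: setting $k=1$ means that each edge between $q_m$ and the vertices $p_i$ is replaced by a single (ordinary) edge, so the modification operation $M^k$ leaves the graph unchanged. The first step is therefore simply to justify this identification from the definition of $M^k$ given in \secref{sec introduction}, which is immediate.

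With this in hand, the second step is to substitute $k=1$ into the closed form $t(M^k K_{m,n}) = k \cdot n^{m-1}(m+k-1)^{n-1}$ supplied by \thmref{thm MKmn}. This yields
\begin{equation*}
t(K_{m,n}) = t(M^1 K_{m,n}) = 1 \cdot n^{m-1}(m+1-1)^{n-1} = n^{m-1} m^{n-1},
\end{equation*}
which is exactly the asserted expression. No induction, binomial identity, or fresh combinatorial bookkeeping is required at this stage, because all of that work has already been carried out in the proof of \thmref{thm MKmn}.

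In short, there is no genuine obstacle: the only point to check is the validity of the identification $K_{m,n} = M^1 K_{m,n}$, after which \corref{cor Kmn} is a one-line consequence of the main theorem of this section. The result recovers the classical count $t(K_{m,n}) = n^{m-1} m^{n-1}$ as a special case of the more general formula, in the same spirit as \corref{cor Kn} was derived from \thmref{thm Kn}.
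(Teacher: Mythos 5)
Your proposal is correct and matches the paper exactly: the paper derives \corref{cor Kmn} as an immediate consequence of \thmref{thm MKmn} via the identification $K_{m,n}=M^1K_{m,n}$, which is precisely your specialization $k=1$ in the formula $t(M^k K_{m,n})=k\cdot n^{m-1}(m+k-1)^{n-1}$. Nothing further is needed.
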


\section{Generalized Complete Bipartite Graphs}\label{sec bipartite general}

In this section, we consider a generalization of $M^kK_{m,n}$. Namely, the generalized complete bipartite graph 
$M^{k_1,k_2,\ldots, k_m} K_{m,n}$. This graph is a multiple edge version of $K_{m,n}$. Let $V_1 \cup V_2$ be the vertex set of $M^{k_1,k_2,\ldots, k_m} K_{m,n}$, where  $V_1$ and $V_2$ are as defined in 
\secref{sec introduction}. 

We first need a preliminary lemma.
\begin{lemma}\label{lem counting}
Let $A=\{ k_1, \, k_2, \, \ldots, \, k_m  \}$. For any integer $1 \leq j \leq m$, we have
\begin{equation*}
\begin{split}
\sum_{B \subset A, \, \,  |B| = j} \, \,  \sum_{k \in B } k =\binom{m-1}{j-1}(k_1+k_2+\cdots+k_m).
\end{split}
\end{equation*}
\end{lemma}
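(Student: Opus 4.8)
The plan is to prove this by a double-counting argument. The left-hand side sums, over all $j$-element subsets $B \subset A$, the sum of the elements of $B$. I would interpret this as counting the total contribution of each $k_\ell$ across all such subsets, which converts the double sum into a single sum with a combinatorial weight.

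First I would switch the order of summation so that the outer sum ranges over the individual elements $k_\ell$ of $A$. For a fixed element $k_\ell$, the quantity $k_\ell$ appears in the inner sum exactly once for each $j$-element subset $B$ that contains $k_\ell$. The number of such subsets is the number of ways to choose the remaining $j-1$ elements from the other $m-1$ elements of $A$, namely $\binom{m-1}{j-1}$. Hence each $k_\ell$ is counted with multiplicity $\binom{m-1}{j-1}$, and the total becomes
\begin{equation*}
\sum_{B \subset A, \, \, |B| = j} \, \, \sum_{k \in B} k = \sum_{\ell=1}^m \binom{m-1}{j-1} k_\ell = \binom{m-1}{j-1} \sum_{\ell=1}^m k_\ell,
\end{equation*}
which is exactly the claimed identity after recognizing $\sum_{\ell=1}^m k_\ell = k_1 + k_2 + \cdots + k_m$.

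The only subtlety worth stating carefully is the counting of the multiplicity: one must verify that every $j$-element subset containing a fixed $k_\ell$ corresponds bijectively to a $(j-1)$-element subset of $A \setminus \{k_\ell\}$, which has $m-1$ elements. This is immediate once we note that the elements of $A$ are treated as distinct (indexed) objects, so even if some of the values $k_i$ coincide numerically, the subsets are still counted by their index sets. I expect no real obstacle here; the main care is simply to make the interchange of summation and the multiplicity count explicit, and to note that the bound $1 \leq j \leq m$ guarantees $\binom{m-1}{j-1}$ is a well-defined nonzero binomial coefficient.
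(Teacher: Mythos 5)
Your proof is correct and follows essentially the same route as the paper: the paper's proof is exactly this double-counting argument, observing that each $k_i$ appears in the summation $\binom{m-1}{j-1}$ times, namely once for each $j$-element subset of $A$ containing it. You have simply spelled out the interchange of summation and the indexed-multiset caveat more explicitly than the paper does.
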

\begin{proof}
In this summation, each $k_i$ appears  $\binom{m-1}{j-1}$ times, which is the number of $j$ element subsets of $A$ that contain $k_i$. Then the result follows.
\end{proof}

Next, we give a formula for the total number of spanning trees of the generalized complete bipartite graph $M^{k_1,k_2,\ldots, k_m} K_{m,n}$:
\begin{theorem}\label{thm gen MKmn}
For any integers $m \geq 1$, $n \geq 1$ and $k_i \geq 1$ for each $i= 1, \, 2, \ldots, m$, we have
\begin{equation*}\label{}
\begin{split}
t(M^{k_1,k_2,\ldots, k_m} K_{m,n})= n^{m-1} k_1k_2\cdots k_m (k_1+k_2+\cdots+k_m)^{n-1}.
\end{split}
\end{equation*}
\end{theorem}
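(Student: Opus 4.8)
The plan is to run a strong induction on $n=|V_1|$, using the vertex deletion formula of \thmref{thm main} applied to the vertex $p_n$, whose neighbourhood is all of $V_2$ and which is joined to $q_i$ by $k_i$ edges. Write $G:=M^{k_1,k_2,\ldots,k_m}K_{m,n}$, and set $P=k_1k_2\cdots k_m$ and $K=k_1+k_2+\cdots+k_m$, so the target identity reads $t(G)=n^{m-1}PK^{n-1}$. First I would dispose of the two degenerate situations in which \thmref{thm main} does not apply: when $n=1$ the graph is a union of $m$ banana graphs $B_{k_i}$ glued at the single vertex $p_1$, so a spanning tree chooses exactly one of the $k_i$ edges to each $q_i$ and $t=P=n^{m-1}PK^{n-1}$; when $m=1$ the graph is the one-point union of $n$ copies of $B_{k_1}$, giving $t=k_1^{\,n}=n^{0}k_1k_1^{\,n-1}$. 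These furnish the base of the induction, and the deletion argument will only be needed for $m\geq 2$ and $n\geq 2$, where removing $p_n$ leaves a connected graph so that $p_n$ is not a cut vertex.

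For the inductive step I would delete $p_n$. Since $G-p_n=M^{k_1,\ldots,k_m}K_{m,n-1}=:H$, and since $p_n$ is joined to $q_i$ by $k_i$ edges, \thmref{thm main} gives
\begin{equation*}
t(G)=K\,t(H)+\sum_{j=2}^{m}\ \sum_{\substack{S\subset V_2\\ |S|=j}}\Big(\prod_{i\in S}k_i\Big)\,t(H_S).
\end{equation*}
The conceptual heart of the argument is the identification of $H_S$. Contracting a subset $S\subset V_2$ merges the vertices $q_i$ ($i\in S$) into a single vertex joined to every $p_\ell$ by $\sigma_S:=\sum_{i\in S}k_i$ edges, while the remaining $q_i$ keep their multiplicities $k_i$; hence $H_S=M^{\sigma_S,\,(k_i)_{i\notin S}}K_{m-j+1,\,n-1}$ is again a generalized complete bipartite graph, now with $n-1$ vertices on the $V_1$ side. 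This is the step I expect to be the main obstacle, since it is where one must see that contracting $V_2$-vertices keeps us inside the family (contracting $V_1$-vertices would not), which is precisely why $p_n$ and not $q_m$ is the vertex to delete.

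Once $H_S$ is recognized, the induction hypothesis applies to $H$ and to every $H_S$ because they all have strictly fewer than $n$ vertices in $V_1$. The multiplicities of $H_S$ sum to $\sigma_S+\sum_{i\notin S}k_i=K$ and have product $\sigma_S\prod_{i\notin S}k_i$, so the hypothesis yields $t(H_S)=(n-1)^{m-j}\sigma_S\big(\prod_{i\notin S}k_i\big)K^{n-2}$. The decisive simplification is that the external factor $\prod_{i\in S}k_i$ completes this product to $P$, giving
\begin{equation*}
\Big(\prod_{i\in S}k_i\Big)t(H_S)=(n-1)^{m-j}\,\sigma_S\,P\,K^{n-2},
\end{equation*}
a quantity depending on $S$ only through $\sigma_S$. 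Summing over all $S$ with $|S|=j$ and invoking \lemref{lem counting} to replace $\sum_{|S|=j}\sigma_S$ by $\binom{m-1}{j-1}K$ collapses the $j$-th layer to $\binom{m-1}{j-1}(n-1)^{m-j}PK^{n-1}$; the leading term $K\,t(H)=(n-1)^{m-1}PK^{n-1}$ is exactly this expression for $j=1$ since $\binom{m-1}{0}=1$. Therefore $t(G)=PK^{n-1}\sum_{j=1}^{m}\binom{m-1}{j-1}(n-1)^{m-j}$, and after reindexing by $\ell=j-1$ the Binomial Theorem evaluates the sum to $(1+(n-1))^{m-1}=n^{m-1}$, which completes the induction.
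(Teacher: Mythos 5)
Your proposal is correct and takes essentially the same route as the paper's own proof: strong induction on $n$, deletion of $p_n$ via \thmref{thm main}, recognition of each contraction $H_S$ as the generalized complete bipartite graph $M^{\sigma_S,\,(k_i)_{i\notin S}}K_{m-j+1,\,n-1}$, collapse of the inner sum by \lemref{lem counting}, and the Binomial Theorem finish. Your explicit treatment of the degenerate cases ($m=1$, and the observation that $p_n$ is not a cut vertex when $n\geq 2$) is a minor tightening of hypotheses the paper leaves implicit, but it does not change the argument.
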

\begin{proof}
We first note that $M^{k_1,k_2,\ldots, k_m} K_{m,n}-p_n=M^{k_1,k_2,\ldots, k_m} K_{m,n-1}$. We denote this graph by $H$. For the sake of brevity, we set $T:=k_1+k_2+\cdots+k_m$ and $P:= k_1 k_2 \cdots k_m$.

When $S= \{ q'_{1}, \, q'_{2}, \, \ldots, \, q'_{j} \} \subset V_2=\{ q_1, \, \ldots, \, q_m \}$, we have the complement set $V_2-S= \{ q'_{j+1}, \, q'_{j+2}, \, \ldots, \, q'_{m} \}$. For such a set $S$, we obtain the graph $H_S$ by identifying the $j$ vertices in $S$. Suppose that the number of edges between the vertices 
$q'_i$ and $p_j$ is $k'_i$. Then we note that 
$H_S$ is the same as the graph $M^{k'_{1}+ k'_{2}+ \cdots + k'_{j}, \, k'_{j+1},k'_{j+2},\ldots, k'_m} K_{m-j+1,n-1}$ with possibly different vertex labeling.

We prove the given formula by strong induction on $n$.

\textbf{Case $n=1$:} In this case, $M^{k_1,k_2,\ldots, k_m} K_{m,1}$ is the one point union of $m$ banana graphs $B_{k_i}$, where $i= 1, \, 2, \ldots, m$. Thus,  $t(M^{k_1,k_2,\ldots, k_m} K_{m,1})=k_1k_2\cdots k_m$ for any $m \geq 1$. This agrees with the given formula for $n=1$.

Suppose that  the formula in the theorem holds for each integers $s$ such that $1 \leq s < n$, where $n$ is some integer. Our aim is to show that it also holds for $n$. 
First, note that
\begin{equation}\label{eqn gen MKmn1}
\begin{split}
H_S=(n-1)^{m-j} \Big( \sum_{i \in I_S } k'_i \Big) \Big( \prod_{i \in I_{V_2-S} } k'_i \Big) T^{n-2}
\end{split}
\end{equation}
for any set $S \subset V_2$ with $|S| \geq 2$ by the assumption we made. Here, $I_S$ is the set of indexes of the vertices in $S$. Similarly,
\begin{equation}\label{eqn gen MKmn2}
\begin{split}
t(H)=(n-1)^{m-1} k_1 k_2 \cdots k_m T^{n-2}.
\end{split}
\end{equation}
Since $N_{M^{k_1,k_2,\ldots, k_m} K_{m,n}}(p_n)=V_2$, \thmref{thm main} gives
\begin{equation*}\label{}
\begin{split}
t(M^{k_1,k_2,\ldots, k_m} K_{m,n}) &= T \cdot t(H)+ \sum_{S \subset V_2, \, \,  |S| \geq 2} 
\Big( \prod_{i \in I_S } k'_i \Big)  t(H_{S})\\
&= T \cdot t(H)+ \sum_{j=2}^m\sum_{S \subset V_2, \, \,  |S| \geq j} 
\Big( \prod_{i \in I_S } k'_i \Big)  t(H_{S})
\end{split}
\end{equation*}
We continue by using \eqnref{eqn gen MKmn1},
\begin{equation*}\label{}
\begin{split}
&=T \cdot t(H)+
  \sum_{j=2}^m \sum_{S \subset V_2, \, \,  |S| \geq j} \Big( \prod_{i \in I_S } k'_i \Big) 
 (n-1)^{m-j} \Big( \prod_{i \in I_{V_2-S} } k'_i \Big) \Big( \sum_{i \in I_S } k'_i \Big)  T^{n-2}\\
&=T \cdot t(H)+
  \sum_{j=2}^m \sum_{S \subset V_2, \, \,  |S| \geq j} 
 (n-1)^{m-j} \Big( \sum_{i \in I_S } k'_i \Big)  P \cdot T^{n-2}
\end{split}
\end{equation*}
Next, we use \eqnref{eqn gen MKmn2} to continue 
\begin{equation*}\label{}
\begin{split} 
 &= (n-1)^{m-1} P \cdot T^{n-1}+ 
  P \cdot T^{n-2}  \sum_{j=2}^m \sum_{S \subset V_2, \, \,  |S| \geq j} 
 (n-1)^{m-j} \Big( \sum_{i \in I_S } k'_i \Big)\\
&= (n-1)^{m-1} P \cdot T^{n-1}+ 
  P \cdot T^{n-2} \sum_{j=2}^m (n-1)^{m-j}  \sum_{S \subset V_2, \, \,  |S| = j} 
 \Big( \sum_{i \in I_S } k'_i \Big)\\
&= (n-1)^{m-1} P \cdot T^{n-1}+ 
  P \cdot T^{n-2} \sum_{j=2}^m (n-1)^{m-j} \binom{m-1}{j-1} T, \quad \text{by \lemref{lem counting}}\\
&= (n-1)^{m-1} P \cdot T^{n-1}+ 
  P \cdot T^{n-1} \sum_{j=1}^{m-1} (n-1)^{m-1-j} \binom{m-1}{j}, \quad \text{by the change of index}\\
&= P \cdot T^{n-1} \sum_{j=0}^{m-1} (n-1)^{m-1-j} \binom{m-1}{j}\\
&= n^{m-1}P \cdot T^{n-1}, \quad \text{by the Binomial Theorem}. 
\end{split}
\end{equation*} 
This completes the proof.
\end{proof}
In fact, \thmref{thm MKmn} and \corref{cor Kmn} are special cases of \thmref{thm gen MKmn}.

\section{Generalized Half Cone of $M^{k_1,k_2,\ldots, k_m} K_{m,n}$}\label{sec half cone bipartite}

In this section, we consider the graph
$F^k M^{k_1,k_2,\ldots, k_m} K_{m,n}$,  the generalized half cone of the modified complete bipartite graph $M^{k_1,k_2,\ldots, k_m} K_{m,n}$. The graph $F^k M^{k_1,k_2,\ldots, k_m} K_{m,n}$ has vertex set $V_1\cup V_2\cup \{ p\}$ as defined in \secref{sec introduction}, and it has $mk+k_1+k_2+\cdots+k_m$ edges.

The following preliminary lemma is needed for our computations below:
\begin{lemma}\label{lem counting2}
Let $A=\{ k_1, \, k_2, \, \ldots, \, k_m  \}$. For any integer $1 \leq j \leq m$, we have
\begin{equation*}
\begin{split}
\sum_{B \subset A, \, \,  |B| = j} \Big( \prod_{k' \in A-B } k' \Big) \Big( \sum_{k \in B } k \Big)  
=(m-j+1)\sum_{C \subset A, \, \,  |C| = m-j+1} \, \prod_{k \in C } k.
\end{split}
\end{equation*}
\end{lemma}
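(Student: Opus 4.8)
The plan is to prove the identity by a double-counting argument that rewrites both sides as a single sum over the same collection of $(m-j+1)$-element subsets of $A$, each weighted by the product of its elements. Throughout I would treat the $k_i$ as distinct labelled elements (equivalently, I sum over their index sets), so that coinciding numerical values cause no ambiguity in the subset bookkeeping.

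First I would distribute the inner sum on the left-hand side, rewriting it as a sum over pairs consisting of a subset $B \subset A$ with $|B| = j$ together with a distinguished element $k \in B$:
$$\sum_{B \subset A, \, |B|=j} \Big(\prod_{k' \in A-B} k'\Big)\Big(\sum_{k \in B} k\Big) = \sum_{\substack{B \subset A, \, |B|=j \\ k \in B}} k \prod_{k' \in A-B} k'.$$
The key observation is that for such a pair the weight $k \prod_{k' \in A-B} k'$ is exactly $\prod_{c \in C} c$, where $C := (A-B) \cup \{k\}$: since $k \in B$ is disjoint from $A-B$, the factor $k$ is genuinely new, so $|C| = (m-j)+1 = m-j+1$ and the product over $C$ reproduces the summand.

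I would then set up the bijection between pairs $(B,k)$ with $|B|=j$, $k \in B$, and pairs $(C,k)$ with $|C| = m-j+1$, $k \in C$, sending $(B,k) \mapsto ((A-B)\cup\{k\},\, k)$ with inverse $(C,k) \mapsto ((A-C)\cup\{k\},\, k)$; a short check of cardinalities and memberships (for instance, $|A-C| = j-1$ and $k \notin A-C$, giving $|B|=j$, and $A-B = C-\{k\}$ so that $(A-B)\cup\{k\}=C$) shows that these maps are mutually inverse. Under this correspondence the reindexed sum becomes $\sum_{C \subset A, \, |C|=m-j+1} \sum_{k \in C} \prod_{c \in C} c$, and since the summand no longer depends on $k$, the inner sum contributes the factor $|C| = m-j+1$, which yields precisely the right-hand side of the lemma. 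The argument is entirely elementary; the only point demanding care is verifying that the map $(B,k)\mapsto(C,k)$ respects the size constraints and is invertible, so that every $(m-j+1)$-subset $C$ arises exactly from the pairs obtained by singling out each of its $m-j+1$ elements.
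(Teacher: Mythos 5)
Your proof is correct, and it takes a genuinely different (if closely related) route from the paper's. The paper argues in aggregate: expanding the left-hand side yields $j\binom{m}{j}$ monomials, each a product of $m-j+1$ elements of $A$; since there are $\binom{m}{m-j+1}$ possible such monomials, each must appear $j\binom{m}{j}/\binom{m}{m-j+1}=m-j+1$ times. That count-and-divide step silently uses an equidistribution claim --- that every monomial occurs the same number of times --- which is true by symmetry but not justified in the paper. Your argument replaces this with an explicit bijection $(B,k)\mapsto\bigl((A-B)\cup\{k\},\,k\bigr)$ between marked $j$-subsets and marked $(m-j+1)$-subsets, so the multiplicity $m-j+1$ of each monomial $\prod_{c\in C}c$ emerges locally as $|C|$ rather than from a global division; no equidistribution is needed. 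Your insistence on treating the $k_i$ as labelled elements (summing over index sets) is also a real improvement in rigor: the paper's talk of ``different monomials'' implicitly assumes the same labelled reading, and the lemma is in fact applied with possibly equal values (e.g.\ $k_i=s$ for all $i$ in the second half of Theorem~\ref{thm gen half cone MKmn}), where subsets of the literal set $A$ would collapse. The paper's proof is shorter; yours is airtight and verifies the same identity constructively.
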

\begin{proof}
In this summation, there are $j \binom{m}{j}$ monomials such that each monomial is a product of $m-j+1$ elements from $A$. On the other hand, there can be at most $\binom{m}{m-j+1}$ different such monomials. That means, each possible such monomial appears $\frac{j \binom{m}{j}}{\binom{m}{m-j+1}}=m-j+1$ number of times in this summation.
\end{proof}

Next, we compute the number of spanning trees of the graph $F^k M^{k_1,k_2,\ldots, k_m} K_{m,n}$:
\begin{theorem}\label{thm gen half cone MKmn}
For any integers $k \geq 1$, $m \geq 1$, $n \geq 1$ and $k_i \geq 1$ for each $i= 1, \, 2, \ldots, m$, we have
\begin{equation*}\label{}
\begin{split}
t(F^kM^{k_1,k_2,\ldots, k_m} K_{m,n})= (k_1+k_2+\cdots+k_m)^{n-1} k \Big( \prod_{i=1}^m (k+k_i n) \Big) \sum_{i=1}^m \frac{k_i}{k+k_i n}.
\end{split}
\end{equation*}
In Particular, if $k_i=s$ for each $i=1, \, 2, \, \ldots, \, m$, then 
\begin{equation*}\label{}
\begin{split}
t(F^kM^{s,s,\ldots, s} K_{m,n})=  s^n m^n k (k+s n)^{m-1}.
\end{split}
\end{equation*}
\end{theorem}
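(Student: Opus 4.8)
The plan is to imitate the pattern established in \thmref{thm Kn} and \thmref{thm gen MKmn}: delete the apex vertex $p$, apply \thmref{thm main}, and reduce every resulting identification graph to a \emph{generalized complete bipartite graph} whose tree-count is already known from \thmref{thm gen MKmn}. Here the crucial structural observation is that $F^kM^{k_1,k_2,\ldots, k_m} K_{m,n}-p$ is exactly $M^{k_1,k_2,\ldots, k_m} K_{m,n}$, and that $p$ is joined to each of the $m$ vertices $q_1,\ldots,q_m$ by $k$ edges, so $N(p)=V_2$ and every $a_i$ in \thmref{thm main} equals $k$. Thus I would write
\begin{equation*}
t(F^kM^{k_1,\ldots,k_m}K_{m,n}) = mk\, t(H) + \sum_{\substack{S\subset V_2\\ |S|\geq 2}} k^{|S|}\, t(H_S),
\end{equation*}
where $H=M^{k_1,\ldots,k_m}K_{m,n}$ and $H_S$ is the graph obtained by identifying the $q$-vertices in $S$.

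First I would identify $H_S$ combinatorially. If $S$ merges the vertices indexed by a subset with multiplicities $\{k_i : i\in I_S\}$, then identifying them produces a single bipartite ``super-vertex'' attached to all $n$ of the $p$-vertices by $\sum_{i\in I_S}k_i$ edges each, leaving the remaining $m-|S|$ vertices untouched; hence $H_S$ is a generalized complete bipartite graph of the form $M^{(\sum_{i\in I_S}k_i),\,k_{j_1},\ldots}K_{m-|S|+1,\,n}$. Applying \thmref{thm gen MKmn} to each such $H_S$ gives a closed expression: the factor $n^{(m-|S|+1)-1}=n^{m-|S|}$, the product of all the multiplicities (which is $\big(\prod_{i\in I_{V_2-S}}k_i\big)\big(\sum_{i\in I_S}k_i\big)$), and the power of the total, which is still $T^{n-1}$ with $T=k_1+\cdots+k_m$ since identification preserves the total edge-multiplicity. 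I would then sort the double sum by $j=|S|$ and invoke \lemref{lem counting2} to collapse the inner sum $\sum_{|S|=j}\big(\prod_{i\in I_{V_2-S}}k_i\big)\big(\sum_{i\in I_S}k_i\big)$ into $(m-j+1)\sum_{|C|=m-j+1}\prod_{k\in C}k$, i.e.\ into a weighted elementary symmetric polynomial $e_{m-j+1}(k_1,\ldots,k_m)$.

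After this the whole expression becomes $T^{n-1}k$ times a sum over $j$ of terms $k^{j-1}n^{m-j}(m-j+1)e_{m-j+1}$, together with the $j=1$ (plain deletion) term which fits the same pattern. Re-indexing by $r=m-j+1$, the bracketed factor should telescope into $\sum_{r=1}^{m} r\,k^{m-r}n^{r-1}e_r(k_1,\ldots,k_m)$, and the target answer $k\big(\prod_{i=1}^m(k+k_i n)\big)\sum_{i=1}^m \frac{k_i}{k+k_i n}$ is precisely the logarithmic-derivative form of $\prod_i(k+k_i n)$: writing $f(x)=\prod_{i=1}^m(x+k_i n)$ one has $f(x)\sum_i\frac{k_i}{x+k_i n}=\tfrac1n f'(x)\,$ evaluated so that the coefficients reproduce $\sum_r r\,x^{m-r}n^{r-1}e_r$. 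So I would verify that $\prod_i(k+k_i n)=\sum_{r=0}^m e_r(k_1,\ldots,k_m)\,n^r k^{m-r}$ and that multiplying by $\sum_i \frac{k_i}{k+k_i n}$ reproduces exactly the coefficient pattern obtained from the deletion sum; the specialization $k_i=s$ then follows by setting all variables equal, where $\prod_i(k+sn)=(k+sn)^m$ and $\sum_i\frac{s}{k+sn}=\frac{ms}{k+sn}$, giving $s^n m^n k(k+sn)^{m-1}$ after noting $T^{n-1}=(sm)^{n-1}$.

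The main obstacle I anticipate is the bookkeeping of the symmetric-function identity in the last step: matching the deletion-sum coefficients $r\,k^{m-r}n^{r-1}e_r$ against the logarithmic-derivative expression $\prod_i(k+k_i n)\sum_i\frac{k_i}{k+k_i n}$ requires recognizing that the latter equals $\sum_{i}k_i\prod_{j\neq i}(k+k_j n)$, and then confirming that the coefficient of $k^{m-r}n^{r-1}$ in this sum is exactly $r\,e_r(k_1,\ldots,k_m)$ (each monomial in $e_r$ arises from $r$ distinct choices of the distinguished index $i$). Once this purely algebraic identity is pinned down, everything else is routine re-indexing of the type already carried out in the earlier theorems, and \lemref{lem counting2} is tailor-made to supply the needed collapse.
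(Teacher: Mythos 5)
Your proposal is correct and takes essentially the same route as the paper: delete the apex $p$, apply \thmref{thm main} with all $a_i=k$, evaluate each $t(H_S)$ via \thmref{thm gen MKmn} (with the $K_{m-j+1,n}$ structure and factor $T^{n-1}$, as you note), collapse the inner sums with \lemref{lem counting2}, and identify the resulting polynomial $\sum_{r=1}^m r\,k^{m-r}n^{r-1}e_r(k_1,\ldots,k_m)$ with $\prod_{i=1}^m(k+k_in)\sum_i k_i/(k+k_in)$ --- which the paper expresses instead as $\Big[x\frac{d}{dy}\prod_{i=1}^{m}(x+k_iy)\Big]\Big|_{x=k,\,y=n}$, so your explicit coefficient-matching is just a spelled-out version of the paper's final assertion. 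One small slip to fix: your aside $f(x)\sum_i\frac{k_i}{x+k_in}=\frac{1}{n}f'(x)$ is false if $f'$ means the $x$-derivative (it should be the $y$-derivative of $\prod_i(x+k_iy)$ evaluated at $y=n$), but the identity you actually verify --- that $\sum_i k_i\prod_{j\neq i}(k+k_jn)$ carries coefficient $r\,e_r(k_1,\ldots,k_m)$ on $k^{m-r}n^{r-1}$ --- is correct, so the argument stands.
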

\begin{proof}
Let $H=F^kM^{k_1,k_2,\ldots, k_m} K_{m,n}-p$. Note that $H$ is nothing but $M^{k_1,k_2,\ldots, k_m} K_{m,n}$.
Thus, by \thmref{thm gen MKmn},
\begin{equation}\label{eqn half gen MKmn1}
\begin{split}
t(H)= n^{m-1} k_1k_2\cdots k_m (k_1+k_2+\cdots+k_m)^{n-1}.
\end{split}
\end{equation}
Again, we set $T:=k_1+k_2+\cdots+k_m$ and $P:= k_1 k_2 \cdots k_m$.
We continue as in the proof of \thmref{thm gen MKmn}. Namely,
for $S= \{ q'_{1}, \, q'_{2}, \, \ldots, \, q'_{j} \} \subset V_2=\{ q_1, \, \ldots, \, q_m \}$, we have the complement set $V_2-S= \{ q'_{j+1}, \, q'_{j+2}, \, \ldots, \, q'_{m} \}$. Let $H_S$ be the graph obtained from $H$ by identifying the vertices in $S$. Suppose that the number of edges between the vertices 
$q'_i$ and $p_j$ is $k'_i$. Then one notes that 
$H_S$ is the same as the graph $M^{k'_{1}+ k'_{2}+ \cdots + k'_{j}, \, k'_{j+1},k'_{j+2},\ldots, k'_m} K_{m-j+1,n}$ with possibly different vertex labeling.
Thus, again by \thmref{thm gen MKmn},
\begin{equation}\label{eqn half gen MKmn2}
\begin{split}
t(H_S)= n^{m-j} (k'_{1}+ k'_{2}+ \cdots + k'_{j} ) k'_{j+1} k'_{j+2} \cdots k'_m (k_1+k_2+\cdots+k_m)^{n-1}.
\end{split}
\end{equation}
Since $N_{F^kM^{k_1,k_2,\ldots, k_m} K_{m,n}}(p)=V_2$, \thmref{thm main} gives
\begin{equation*}\label{}
\begin{split}
&t(F^kM^{k_1,k_2,\ldots, k_m} K_{m,n}) = mk \cdot t(H)+ \sum_{S \subset V_2, \, \,  |S| \geq 2} 
k^{|S|} t(H_{S})\\
&= mk \cdot t(H)+ \sum_{j=2}^m \sum_{S \subset V_2, \, \,  |S| = j} 
k^{j} t(H_{S})\\
&=mk \cdot t(H)+
  \sum_{j=2}^m  \sum_{S \subset V_2, \, \,  |S| = j} k^{j}
 n^{m-j} \Big( \prod_{i \in I_{V_2-S} } k'_i \Big) \Big( \sum_{i \in I_S } k'_i \Big)  T^{n-1}, \quad \text{ by \eqnref{eqn half gen MKmn2}}\\
&=mk \cdot t(H)+
 T^{n-1} \sum_{j=2}^m  k^{j}
 n^{m-j} \sum_{S \subset V_2, \, \,  |S| = j}  \Big( \prod_{i \in I_{V_2-S} } k'_i \Big) \Big( \sum_{i \in I_S } k'_i \Big)  \\
&=mk \cdot t(H)+
 T^{n-1} \sum_{j=2}^m  k^{j}
 n^{m-j} (m-j+1) \sum_{S \subset V_2, \, \,  |S| = m-j+1}  \Big( \prod_{i \in I_{S} } k_i \Big),
 \quad \text{by \lemref{lem counting2}} \\
\end{split}
\end{equation*}
We continue by using \eqnref{eqn half gen MKmn1},
\begin{equation*}\label{}
\begin{split}
&=mk n^{m-1} P T^{n-1} + T^{n-1} \sum_{j=2}^m  k^{j}
 n^{m-j} (m-j+1) \sum_{S \subset V_2, \, \,  |S| = m-j+1}  \Big( \prod_{i \in I_{S} } k_i \Big)\\
&=T^{n-1} \sum_{j=1}^m  k^{j}
 n^{m-j} (m-j+1) \sum_{S \subset V_2, \, \,  |S| = m-j+1}  \Big( \prod_{i \in I_{S} } k_i \Big).
\end{split}
\end{equation*}
Finally, we note that the sum
$$
\sum_{j=1}^m  k^{j}
 n^{m-j} (m-j+1) \sum_{S \subset V_2, \, \,  |S| = m-j+1}  \Big( \prod_{i \in I_{S} } k_i \Big)
 $$
is nothing but
$$
\Big[x \frac{d}{dy} \prod_{i=1}^{m} (x+kiy)  \Big]\Big|_{x=k,\, y=n}.
$$
Thus, the formula in the theorem follows.
\end{proof}

%
%
%
%
%
%
%
%

\textbf{Declaration of competing interest:} The author declares that he has no known competing financial interest or personal relationship that could have appeared to influence the work reported in this paper.

\end{document}